\documentclass[11pt]{article}
\usepackage{amssymb}
\usepackage{bbm}
\usepackage{amsfonts}
\usepackage{amsmath,latexsym,epsf,mathabx,tikz-cd,physics}
\usepackage{changepage}
\usepackage[all]{xy}
\usepackage{hyperref}
\usepackage{mathrsfs}
\usepackage{indentfirst}
\usepackage{dcolumn}
\usepackage{amsthm}
\numberwithin{equation}{section}
\usepackage{color}
\newdir{ >}{{}*!/-8pt/@{>}}
\textheight 240mm \textwidth 160mm \oddsidemargin 0pt
\evensidemargin 0pt \topmargin -50pt

\begin{document}


\newtheorem{theorem}{\bf Theorem}[section]
\newtheorem{lemma}[theorem]{\bf Lemma}
\newtheorem{corollary}[theorem]{\bf Corollary}
\newtheorem{proposition}[theorem]{\bf Proposition}
\newtheorem{claim}[theorem]{\bf Claim}
\newtheorem{question}[theorem]{\bf Question}
\newtheorem{definition}[theorem]{\bf Definition}
\newtheorem{remark}[theorem]{\bf Remark}
\newtheorem{example}[theorem]{\bf Example}
\newtheorem{notation}[theorem]{\bf Notation}
\newtheorem{assertion}[theorem]{\bf Assertion}
\newtheorem{condition}[theorem]{\bf Condition}
\newcommand{\bt}{\begin{theorem}}
\newcommand{\et}{\end{theorem}}
\newcommand{\bl}{\begin{lemma}}
\newcommand{\el}{\end{lemma}}
\newcommand{\bd}{\begin{definition}}
\newcommand{\ed}{\end{definition}}
\newcommand{\bc}{\begin{corollary}}
\newcommand{\ec}{\end{corollary}}
\newcommand{\bp}{\begin{proof}}
\newcommand{\ep}{\end{proof}}
\newcommand{\bx}{\begin{example}}
\newcommand{\ex}{\end{example}}
\newcommand{\br}{\begin{remark}}
\newcommand{\er}{\end{remark}}
\newcommand{\be}{\begin{equation}}
\newcommand{\ee}{\end{equation}}
\newcommand{\ba}{\begin{align}}
\newcommand{\ea}{\end{align}}
\newcommand{\bn}{\begin{enumerate}}
\newcommand{\en}{\end{enumerate}}
\newcommand{\bcs}{\begin{cases}}
\newcommand{\ecs}{\end{cases}}
\def\jze{{ \begin{pmatrix} 0 & 0 \\ 1 & 0 \end{pmatrix}}\setcounter{equation}{0}}
\def\hjz#1#2{{ \begin{pmatrix} {#1} & {#2} \end{pmatrix}}\setcounter{equation}{0}}
\def\ljz#1#2{{  \begin{pmatrix} {#1} \\ {#2} \end{pmatrix}}\setcounter{equation}{0}}
\def\jz#1#2#3#4{{  \begin{pmatrix} {#1} & {#2} \\ {#3} & {#4} \end{pmatrix}}\setcounter{equation}{0}}

\title{\bf  Sincere silting modules and vanishing conditions\thanks{Supported by the National Natural Science Foundation of China (Grant No. 11771212) and a project funded by the Priority Academic Program Development of Jiangsu Higher
 Education Institutions}}

\footnotetext{E-mail:~liujifen24@163.com,~weijiaqun@njnu.edu.cn}
\smallskip
\author{\small Jifen Liu, Jiaqun Wei\\
\small Institute of Mathematics, School of Mathematics Sciences\\
\small Nanjing Normal University, Nanjing \rm210023 China}
\date{}
\maketitle
\baselineskip 15pt
%
%
%
\vskip 10pt%
\noindent {\bf Abstract}: Let $R$ be a perfect ring and $T$ be an $R$-module. We study characterizations of sincere modules, sincere silting modules and tilting modules in terms of various vanishing conditions. It is proved that $T$ is sincere silting if and only if $T$ is presilting satisfing the vanishing condition $\mathrm{KerExt}^{0\le i\le 1}_R(T,-)=0$, and that $T$ is tilting if and only if $\mathrm{Ker}\mathrm{Ext}^{0\leqslant i\leqslant 1}_{R}(T,-)=0$ and $\mathrm{Gen}T\subseteq \mathrm{Ker}\mathrm{Ext}^{1\leqslant i\leqslant 2}_{R}(T,-)$.
 As an application, we prove that a sincere silting $R$-module $T$ of finite projective dimension is tilting if and only if $\mathrm{Ext}^{i}_{R}(T,T^{(J)})=0$ for all sets $J$ and all integer $i\ge 1$. This not only extends a main result of Zhang \cite{Z} from finitely generated modules over Artin algebras to infinitely generated modules over more general rings, but also gives it a different proof without using the functor $\tau$ and Auslander-Reiten formula.


\noindent {\bf MSC2020}: 16L30; 16D10


\noindent {\bf Keywords}: sincere modules; silting modules; tilting modules; vanishing conditions

%
%
\vskip 30pt
\section{Introduction}
Tilting theory is important in the representation theory of Artin algebras. The study of classical tilting modules was initiated by Brenner and Butler \cite{BB} and
continued by Happel and Ringel \cite{HR}. Since then, the defining conditions for a classical tilting module have been relaxed to tilting modules of finite projective dimension \cite{M}, and further have  been relaxed to general rings and  infinitely generated modules by many authors such as Colby and Fuller   \cite{CF}, Colpi and Trlifaj \cite{CT}, Angeleri-H{\"u}gel and Coelho \cite{AC}, Bazzoni \cite{B0}, etc. In 2014, Adachi, Iyama and Reiten \cite{AIR} introduced $\tau $-tilting modules and
support $\tau $-tilting modules, which are new generalizations of classical tilting modules. Later, Angeleri-H{\"u}gel, Marks and Vit\'{o}ria \cite{AMV} introduced silting modules
over a ring as a general version of support $\tau$-tilting modules.

In this paper, we aim to characterize sincere silting modules and tilting modules (of projective dimension 1) in terms of various vanishing conditions. Vanishing conditions are useful in the study of tilting theory. For instance, a classical result says that, over a ring $R$, an $R$-module $T$ is tilting if and only if $T$ is pretilting satisfying a vanishing condition $\mathrm{KerExt}_R^{0\le i\le 1}(T,-)=0$, see Section 2 for details. We consider the relations between sincere modules and vanishing conditions and use them to obtain new characterizations of sincere silting modules and tilting modules. In particular, we prove that an $R$-module $T$ is sincere silting if and only if $T$ is presilting satisfying the vanishing condition $\mathrm{KerExt}_R^{0\le i\le 1}(T,-)=0$, and that an $R$-module $T$ is tilting if and only if $\mathrm{Ker}\mathrm{Ext}^{0\leqslant i\leqslant 1}_{R}(T,-)=0$ and $\mathrm{Gen}T\subseteq \mathrm{Ker}\mathrm{Ext}^{1\leqslant i\leqslant 2}_{R}(T,-)$.

Recently, Zhang \cite{Z} proved that, over an Artin algebra, every  self-orthogonal $\tau$-tilting module of finite projective dimension is tilting. As an application of our results, we show that every self-orthogonal sincere silting $R$-module $T$ of finite projective dimension is tilting. Since sincere silting modules are the general version of $\tau$-tilting modules, our result covers Zhang's result. Moreover, our method to prove the result is different from Zhang's and hence gives his result a new proof, without using the functor $\tau$ and Auslander-Reiten formula.

This paper is organized as follows: In Section $2$, we recall some definitions and preliminary results. In Section $3$, we give
characterizations of sincere modules, sincere silting modules and tilting modules in terms of vanishing conditions. We apply our result to obtain a generalization of Zhang's result in Section $4$.

\vskip 10pt

Throughout the paper, $R$ will always be a (two-side) perfect ring and modules will always be (not necessarily finitely generated) left $R$-modules. The category of all $R$-modules is denoted by $\mathrm{Mod}R$, and the subcategory of projective (resp., injective) modules is denoted by $\mathrm{Proj}R$ (resp., $\mathrm{Inj}R$).

Let $T$ be an $R$-module. The notation $\mathrm{Pd}T$ denotes the projective dimension of $T$, and $\mathrm{Add}T$ denotes the additive closure of $T$ consisting of all modules isomorphic to a direct summand of some direct sums of copies of $T$. We denote by $\mathrm{Gen}T$ the subcategory of $T$-generated modules (i.e., all epimorphic images of modules in $\mathrm{Add}T$), and by $\mathrm{Pres}T$ the subcategory of $T$-presented modules (i.e., all modules $M$ such that there is an exact sequence $T_1\to T_0\to M\to 0$ for some $T_1,T_0\in\mathrm{Add}T$).

For a subset $I\subseteq \bf{Z}$ and a class $\mathcal{C}\subseteq \mathrm{Mod}R$, we denote

    \centerline{$\mathcal{C}^{\bot_I} := \mathrm{Ker}\mathrm{Ext}^{I}_{R}(\mathcal{C},-) = \{M\ | \ \mathrm{Ext}^{i}_{R}(C,M)=0$ for all $i\in I$ and all $C\in \mathcal{C}\}$,}

    \noindent where $\mathrm{Ext}_R^0 := \mathrm{Hom}_R$. For instance,

       \centerline{${\mathcal{C}^{\bot_0}}: = \{M\ |\ \mathrm{Hom}_R(C,M)=0$ for all $C\in\mathcal{C}\}$,}


            \centerline{$\mathcal{C}^{\bot_{0\le i\le 1}}: = \{M\ |\ \mathrm{Ext}_R^i(C,M)=0$ for all $C\in\mathcal{C}$ and $i=0,1\}$.}

            \noindent The notation ${^{\bot_I}\mathcal{C}}$ is defined similarly.

We depict monomorphisms by $\rightarrowtail$ and epimorphisms by $\twoheadrightarrow $ in diagrams. For two morphisms $f: X\rightarrow Y$ and $g: Y\rightarrow Z$, the composition of $f$ and $g$ is denoted by $gf: X\rightarrow Z$.

\section{Preliminary}
In this section, we recall some basic definitions and results, which will be used later. One can refer to \cite{AF} for unexplained definitions and results.

\begin{definition} {\rm \cite{CT}\label{T1}}
	Let $R$ be a ring and $T$ an $R$-module. $T$ is said to be tilting if $\mathrm{Gen}T=T^{\bot_{1}}$, or equivalently, if $T$ satisfies the following three conditions:

	$(T1)$ $T$ has projective dimension at most one.

	$(T2)$ $\mathrm{Ext}^{1}_{R}(T,T^{(J)})=0$ for any set $J$.

	$(T3)$ There is an exact sequence $0\rightarrow R\rightarrow T_{0}\rightarrow T_{1}\rightarrow 0$ with $T_{0},T_{1}\in \mathrm{Add}T$.

By {\rm \cite[Definition ${^{\ast}3}$]{B0}}, the condition $(T3)$ can be replaced by the following vanishing condition

$(T3){^{\prime}}$ $T^{\bot_{0\le i\le 1}}=0$.
\end{definition}

Following \cite{AMV}, we say that $T$ is {\it partial tilting} if it is a direct summand of a tilting module. Moreover, we say that $T$ is {\it pretilting} if it satisfies conditions $(T1)$ and $(T2)$.

Let $R$ be a ring. For a morphism $\sigma$ in Mod$R$, following \cite{AMV}. we denote

\centerline{$D_{\sigma}:=\{ X\in \mathrm{Mod}R\ |\ \mathrm{Hom}_{A}(\sigma,X)\  \mathrm{is\  surjective}\}.$}

The following lemma lists some properties of the class $D_{\sigma}$.

\begin{lemma} {\rm\cite{AMV}}\label{D}
	Let $\sigma$ be a morphism in $\mathrm{Proj}R$ with the cokernel $T$. Then

	$(1)$ $D_{\sigma}$ is closed under epimorphic images, extensions and direct products.

	$(2)$ $D_{\sigma}\subseteq T^{\bot_{1}}$.
\end{lemma}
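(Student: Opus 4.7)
The plan is to work directly from the definition: $X \in D_\sigma$ precisely when every morphism $P_1 \to X$ (where $P_1$ is the source of $\sigma$, and $P_0$ is the target) factors through $\sigma$. I would rely throughout on the exact sequence $P_1 \xrightarrow{\sigma} P_0 \to T \to 0$, together with the factorization $\sigma = \iota\pi$ of $\sigma$ through its image $K := \mathrm{Im}\,\sigma$, giving a short exact sequence $0 \to K \xrightarrow{\iota} P_0 \to T \to 0$.

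For (1), each closure property unpacks as a routine diagram chase. For epimorphic images: given an epimorphism $p: X \twoheadrightarrow Y$ with $X \in D_\sigma$ and any $f: P_1 \to Y$, projectivity of $P_1$ lifts $f$ to some $\tilde f: P_1 \to X$; by hypothesis $\tilde f = g\sigma$ for some $g: P_0 \to X$, whence $f = (pg)\sigma$. For extensions, given $0 \to X' \to X \to X'' \to 0$, applying $\mathrm{Hom}_R(P_0,-)$ and $\mathrm{Hom}_R(P_1,-)$ produces a commutative diagram whose two rows are short exact (using projectivity of both $P_0$ and $P_1$) and whose vertical maps are the three instances of $\mathrm{Hom}_R(\sigma,-)$; the outer verticals are surjective by hypothesis, so the snake lemma forces the middle vertical to be surjective too. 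For direct products, the adjunction $\mathrm{Hom}_R(-,\prod_i X_i) = \prod_i \mathrm{Hom}_R(-,X_i)$ together with the fact that products of surjections of abelian groups are surjections immediately gives the claim.

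For (2), apply $\mathrm{Hom}_R(-,X)$ to $0 \to K \to P_0 \to T \to 0$. Projectivity of $P_0$ kills $\mathrm{Ext}^1_R(P_0,X)$ and produces the exact tail $\mathrm{Hom}_R(P_0,X) \to \mathrm{Hom}_R(K,X) \to \mathrm{Ext}^1_R(T,X) \to 0$, so it suffices to lift every $f : K \to X$ to some $P_0 \to X$. Given such an $f$, consider $f\pi : P_1 \to X$. Since $X \in D_\sigma$, we may write $f\pi = g\sigma = g\iota\pi$ for some $g: P_0 \to X$. The epimorphism $\pi$ may then be cancelled on the right to yield $f = g\iota$, which is exactly the required lift; hence $\mathrm{Ext}^1_R(T,X)=0$.

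The one step that deserves care is the extension case in (1): one must verify that both rows in the diagram are short exact (not merely right exact), which is precisely where the projectivity of both $P_0$ and $P_1$ enters. Once this is in place, the rest of the argument is essentially formal from the definition of $D_\sigma$.
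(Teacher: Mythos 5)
Your proof is correct. Note that the paper does not prove this lemma at all -- it is quoted from Angeleri H\"ugel--Marks--Vit\'oria \cite{AMV} -- and your argument (lifting along $P_1$ for epimorphic images, the two short exact $\mathrm{Hom}$-rows with the snake lemma for extensions, the product adjunction for products, and cancelling the epimorphism $\pi$ in $\sigma=\iota\pi$ to turn factorizations through $\sigma$ into liftings of maps $K\to X$ for part (2)) is essentially the standard one given there. One cosmetic remark: in the extension step only the projectivity of $P_0$ is genuinely needed (to lift a map $P_0\to X''$ to $X$); the right-exactness of the $\mathrm{Hom}_R(P_1,-)$ row, though true, is not required if one does the diagram chase directly, but your snake-lemma formulation is valid as written.
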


We recall the definition of silting modules as follows.

\begin{definition} {\rm{\cite{AMV}}}
	An $R$-module $T$ is partial silting if there is a projective presentation $\sigma$ of $T$ such that
	$D_{\sigma}$ is a torsion class and $T\in D_{\sigma}$.

	An $R$-module $T$ is silting if there is a projective presentation $\sigma$ of $T$ such that $\mathrm{Gen}T=D_{\sigma}$.

    In the case, one says that $T$ is a (partial) silting module with respect to $\sigma$.
\end{definition}

For convenience, we say that an $R$-module $T$ is {\it presilting} if there is a projective presentation $\sigma$ of $T$ such that $\mathrm{Gen}T\subseteq D_{\sigma}$.
    In the case, we also say that $T$ is presilting with respect to $\sigma$. It is easy to see that pretilting modules are just presilting modules of projective dimension not more than 1.

A partial silting module is actually a direct summand of some silting module \cite{AMV}. Thus, partial tilting modules are partial silting.
    It is easy to see that partial silting modules are always presilting. But the converse is not true in general, indeed, \cite[Example 3.9]{AMV} provides a infinite dimensional pretilting module over a finite dimensional algebra which is not partial silting. However, in case that the morphism $\sigma$ is a map in $\mathrm{proj}R$ (the subcategory of all finitely generated projective modules), these two are the same \cite{AMV}.

We remark that presilting modules are also called {\em (large) $\tau $-rigid} modules in \cite{BHPST}. Moreover, there is a characterization of presilting modules as follows.

\begin{lemma}{\rm\cite[Proposition 5.6]{BHPST}}\label{A}
    Let $R$ be a ring and $P_1\stackrel{\sigma}{\to} P_0\to T\to 0$ be exact such that $\mathrm{Im}\sigma$ has a projective cover. Then $T$ is presilting  if and only if $\mathrm{Gen}T\subseteq T^{\bot_1}$.
\end{lemma}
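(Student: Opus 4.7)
The plan is to handle the two implications separately. For the forward direction, suppose $T$ is presilting with respect to some projective presentation $\tau$. By definition $\mathrm{Gen}T \subseteq D_\tau$, and Lemma~\ref{D}(2) gives $D_\tau \subseteq T^{\bot_1}$; combining these two inclusions yields $\mathrm{Gen}T \subseteq T^{\bot_1}$ at once.

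For the converse, suppose $\mathrm{Gen}T \subseteq T^{\bot_1}$. The plan is to exploit the projective cover $\pi\colon P \to \mathrm{Im}\sigma$ granted by the hypothesis, together with the inclusion $\iota\colon \mathrm{Im}\sigma \hookrightarrow P_0$, to build a new projective presentation $\sigma' := \iota\pi\colon P \to P_0$ of $T$ (note that $\mathrm{coker}(\sigma') = P_0/\mathrm{Im}\sigma = T$), and then to establish $\mathrm{Gen}T \subseteq D_{\sigma'}$. Fixing $M \in \mathrm{Gen}T$, the map $\mathrm{Hom}_R(\sigma', M)$ decomposes as $\mathrm{Hom}_R(\pi, M) \circ \mathrm{Hom}_R(\iota, M)$, and I would verify that each factor is surjective separately. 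The surjectivity of $\mathrm{Hom}_R(\iota, M)$ comes directly from the long exact sequence attached to $0 \to \mathrm{Im}\sigma \to P_0 \to T \to 0$ combined with $\mathrm{Ext}^1_R(T, M) = 0$ (which holds because $M \in \mathrm{Gen}T \subseteq T^{\bot_1}$); this part is routine.

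The harder step, which I expect to be the main obstacle, is the surjectivity of $\mathrm{Hom}_R(\pi, M)$: one must show that every morphism $f\colon P \to M$ factors through $\pi$, or equivalently that $f|_{\ker\pi} = 0$. Since $\pi$ is a projective cover, $\ker\pi$ is superfluous in $P$, so $\ker\pi \subseteq JP$ with $J = \mathrm{rad}\,R$, and hence $f(\ker\pi) \subseteq JM$; strengthening this inclusion to $f(\ker\pi) = 0$ is the technical core, and is where the $T$-generated nature of $M$ must be brought to bear rather than only $\mathrm{Ext}^1_R(T,M) = 0$. A natural approach is to reduce first, via the closure of $D_{\sigma'}$ under epimorphic images (Lemma~\ref{D}(1)), to the special case $M = T^{(I)}$, then lift $f$ through the canonical epimorphism to a map $\widetilde f\colon P \to T^{(I)}$ (using projectivity of $P$) and analyze $\widetilde f|_{\ker\pi}$ componentwise, invoking the perfectness of $R$ (in particular, the superfluous-radical behaviour of projective covers in such rings) to iteratively push the image of $\ker\pi$ into deeper pieces of the filtration and extract the desired vanishing. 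Once this factoring step is in hand, both halves of $\mathrm{Hom}_R(\sigma', M)$ are surjective, so $M \in D_{\sigma'}$; as $M$ was arbitrary in $\mathrm{Gen}T$, this yields $\mathrm{Gen}T \subseteq D_{\sigma'}$ and hence that $T$ is presilting with respect to $\sigma'$.
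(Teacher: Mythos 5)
This lemma is quoted in the paper from \cite[Proposition 5.6]{BHPST} without proof, so the only question is whether your argument stands on its own. Your forward direction is fine, and your reduction of the converse to the modified presentation $\sigma'=\iota\pi$ is the right move: since $\mathrm{Hom}_R(\pi,M)$ is injective ($\pi$ epic), surjectivity of $\mathrm{Hom}_R(\sigma',M)$ is indeed equivalent to the two separate surjectivities you aim for, and the $\iota$-half follows from $\mathrm{Ext}^1_R(T,M)=0$ exactly as you say.

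The gap is the $\pi$-half, i.e.\ the claim that every $f\colon P\to M$ with $M\in\mathrm{Gen}T$ kills $\ker\pi$. You correctly identify this as the technical core, but what you offer for it is not an argument: superfluousness of $\ker\pi$ only gives $f(\ker\pi)\subseteq \mathrm{rad}\,M$, and there is no justification for the proposed ``iteratively push the image into deeper pieces of the filtration'' step --- no filtration is specified, no reason is given why a second pass would improve $\mathrm{rad}\,M$ to $\mathrm{rad}^2 M$, and even if it did, one would still need $\bigcap_n (\mathrm{rad}\,R)^n M=0$, which is not available for an arbitrary $M$. More fundamentally, the mechanism you describe uses only the projective-cover/perfectness data and never the hypothesis $\mathrm{Ext}^1_R(T,\mathrm{Gen}T)=0$, yet that hypothesis is indispensable for this very step: over $R=k[x]/(x^3)$ with $T=R/(x^2)$, the syzygy is $S=R/(x)$ with projective cover $R\to S$ and kernel $(x)$, and the canonical map $R\to T$ does not kill $(x)$ --- consistently, $\mathrm{Ext}^1_R(T,S)\neq 0$ with $S\in\mathrm{Gen}T$. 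So any proof of the factorization claim must weave the Ext-vanishing on $\mathrm{Gen}T$ into the minimality argument (this is precisely the content of \cite[Proposition 5.6]{BHPST}, which in the finitely generated case replaces the Auslander--Reiten/$\tau$ argument), and your sketch does not indicate how to do this; as written, the converse direction is unproven. A minor additional point: the lemma assumes only that $\mathrm{Im}\sigma$ has a projective cover over an arbitrary ring, so an appeal to perfectness of $R$ is not available at the level of generality of the statement itself.
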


In general, not every module has a projective cover.
A ring $R$ is said to be {\em left} (resp., {\it right}) {\it perfect} if every left (resp., right) $R$-module has a projective cover. A ring is {\it (two-side) perfect} if it is both left perfect and right perfect. The following result shows that perfect rings have nice properties.

%
%
%
%
%
%
%
%

\begin{theorem} {\rm (\cite[Theorem P]{B})\label{AA}}
	Given a ring $R$ with the (Jacobson) radical $J=J(R)$, the following statements are equivalent:

	$(1)$ $R$ is left perfect;

	$(2)$ $R/J$ is semisimple and every non-zero left $R$-module contains a maximal submodule;

	$(3)$ $R$ has no infinite set of orthogonal idempotents and every non-zero right $R$-module has a nonzero socle.

\end{theorem}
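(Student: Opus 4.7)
The plan is to establish the cyclic chain $(1)\Rightarrow(2)\Rightarrow(3)\Rightarrow(1)$, with the Jacobson radical $J=J(R)$ and the notion of left $T$-nilpotency (every sequence $a_1,a_2,\dots$ in $J$ satisfies $a_n\cdots a_1=0$ for some $n$) as the main technical devices. The recurring tool is that any projective cover $\pi\colon P\twoheadrightarrow M$ has small kernel, so that $\mathrm{Ker}(\pi)\subseteq JP=\mathrm{rad}(P)$.

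For $(1)\Rightarrow(2)$, I would first show $R/J$ is semisimple: each simple left $R$-module $S$ has a projective cover $P_S\twoheadrightarrow S$; since the kernel is small it sits inside $JP_S$, while $JP_S$ already lies in the kernel (as $JS=0$), so the kernel coincides with $JP_S$ and $S\cong P_S/JP_S$ is projective as an $R/J$-module, forcing $R/J$ to be semisimple. For the maximal-submodule assertion, let $M\ne 0$ with projective cover $\pi\colon P\twoheadrightarrow M$; if $M$ had no maximal submodule then $\mathrm{rad}(M)=M$, which translates to $P=JP+\mathrm{Ker}(\pi)=JP$ by smallness of $\mathrm{Ker}(\pi)$, contradicting the fact that the nonzero projective $P$ admits the nonzero semisimple quotient $P/JP$.

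For $(2)\Rightarrow(3)$, semisimplicity of $R/J$ excludes infinite orthogonal idempotents in $R/J$, and these lift to $R$ through the standard idempotent-lifting modulo $J$, so $R$ itself has no infinite orthogonal family. To produce nonzero right socles, I would first deduce left $T$-nilpotency of $J$ from the existence of maximal submodules: a sequence $a_1,a_2,\dots$ in $J$ with all finite products nonzero would, via the colimit of the chain of left multiplications $R\xrightarrow{a_1}R\xrightarrow{a_2}\cdots$, produce a nonzero module coinciding with its own radical, contradicting (2). Then for any nonzero right module $N$, left $T$-nilpotency forces the right annihilator $\{x\in N\mid xJ=0\}$ to be nonzero, and this $R/J$-submodule is semisimple, landing inside the socle.

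The main obstacle is $(3)\Rightarrow(1)$, the technical heart of Bass's theorem. From (3) one must extract both semisimplicity of $R/J$ and left $T$-nilpotency of $J$; the delicate step is Bass's construction converting a bad sequence in $J$ into an infinite orthogonal set of idempotents via an iterative refinement of the descending chain of right ideals $a_1R\supseteq a_2a_1R\supseteq\cdots$, exploiting the nonzero-socle hypothesis to pick suitable elements at each stage. Once $R/J$ is semisimple and $J$ is left $T$-nilpotent, a projective cover of an arbitrary left $R$-module $M$ is built by decomposing the semisimple $R/J$-module $M/JM$, lifting the projection $F/JF\cong M/JM$ to a surjection $F\twoheadrightarrow M$ from a free module $F$, and verifying smallness of the kernel using $T$-nilpotency; this final smallness verification is where everything comes together, and it is the step I would anticipate consuming the most care.
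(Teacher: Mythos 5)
The paper offers no proof of this statement at all: it is quoted from Bass's Theorem P (compare also Anderson--Fuller, Theorem 28.4), so there is nothing internal to compare your argument against, and your proposal has to be judged as an attempt at the classical theorem itself. Its architecture is the standard one, and the directions $(1)\Rightarrow(2)$ and $(2)\Rightarrow(3)$ are essentially correct, with fixable blemishes. In $(1)\Rightarrow(2)$ you use, as if it were obvious, that a nonzero projective module $P$ satisfies $P/JP\neq 0$; since $P$ need not be finitely generated this is not Nakayama's lemma but a separate nontrivial lemma of Bass (every nonzero projective module has a maximal submodule), and it must be quoted or proved. In $(2)\Rightarrow(3)$ the idempotent step is stated in the wrong direction: no lifting is needed, only the easy observation that an infinite orthogonal set of nonzero idempotents of $R$ stays orthogonal and nonzero modulo $J$ (an idempotent lying in $J$ is $0$). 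Also, for left modules the connecting maps in your colimit must be right multiplications by the $a_i$, and the products the construction controls are $a_1a_2\cdots a_n$, which is the opposite order to the one in your stated definition of left $T$-nilpotency (harmless once you fix a single convention, but as written your definition and your two uses of it do not match); and in the final projective-cover construction one cannot in general take $F$ free with $F/JF\cong M/JM$ --- one needs a direct sum of the projective covers $Re_i$ of the simple modules, hence lifting of idempotents modulo the (nil, because $T$-nilpotent) ideal $J$.

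The genuine gap is $(3)\Rightarrow(1)$, which is where the content of Bass's theorem is concentrated. From ``no infinite orthogonal idempotents and every nonzero right module has nonzero socle'' one must actually derive that $R/J$ is semisimple and $J$ is left $T$-nilpotent; your sketch only names ``Bass's construction\dots iterative refinement of $a_1R\supseteq a_2a_1R\supseteq\cdots$'' without indicating how the socle hypothesis produces the infinite orthogonal family of idempotents, and it gives no argument whatsoever for the semisimplicity of $R/J$ under hypothesis (3) (in the standard proof this requires the essentiality of the right socle of $R$ and an analysis of its homogeneous components, bounded using the idempotent hypothesis, before the projective-cover construction you describe can even start). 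Until that implication is carried out, the proposal is an accurate road map of the textbook proof rather than a proof; if your intent is simply to use the theorem, the honest course is to do what the paper does and cite Bass.
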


Finally, we need the following results on minimal epimorphism and essential monomorphism.

\begin{lemma} {\rm (\cite[Corollary 5.13 and Corollary 5.17]{AF})\label{LC}}
	$(1)$ A monomorphism $f:L\to M$ is essential if and only if, for all homomorphisms (equivalently, epimorphisms) $h$, if $hf$ is monic, then $h$ is monic.

	$(2)$ An epimorphism $g:M\to N$ is minimal if and only if, for all homomorphisms (equivalently, monomorphisms) $h$ , if $gh$ is epimorphic, then $h$ is epimorphic.
\end{lemma}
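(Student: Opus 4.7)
The plan is to prove (1) and (2) dually by testing essentiality and superfluousness against canonical projections and inclusions, and then to upgrade the ``all homomorphisms'' clause to the parenthetical ``epimorphisms''/``monomorphisms'' versions via the standard epi-mono factorisation.

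For (1), the forward implication follows directly from the definition of an essential submodule: if $hf$ is monic but $\ker h\ne 0$, then $\ker h\cap\mathrm{Im}\,f\ne 0$ produces a non-zero $l\in L$ with $hf(l)=0$, a contradiction. For the converse I would argue contrapositively: given a non-zero submodule $N\subseteq M$ with $N\cap\mathrm{Im}\,f=0$, the canonical projection $h:M\twoheadrightarrow M/N$ has $hf$ monic (because $hf(l)=0$ forces $f(l)\in N\cap\mathrm{Im}\,f=0$), so the hypothesis forces $h$ monic, i.e.\ $N=0$, against the choice of $N$. Part (2) is structurally dual: given $h:L\to M$ with $gh$ an epimorphism, I would verify $\mathrm{Im}\,h+\ker g=M$ (for $m\in M$ choose $\ell$ with $g(m)=gh(\ell)$ and write $m=h(\ell)+(m-h(\ell))$), whence superfluousness of $\ker g$ gives $\mathrm{Im}\,h=M$; conversely, if $X+\ker g=M$ then $g(X)=g(X+\ker g)=N$, so the inclusion $X\hookrightarrow M$ composes with $g$ to an epi, and the hypothesis yields $X=M$, proving $\ker g$ superfluous.

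The parenthetical equivalences are then extracted from the factorisation $h=\iota\circ\pi$ with $\pi$ epi onto $\mathrm{Im}\,h$ and $\iota$ the corresponding mono inclusion. In (1), if $hf$ is monic then so is $\pi f$ (because $\iota$ is monic), so the epimorphism-only hypothesis applied to $\pi$ returns $\pi$ monic, making $h$ a composite of monos; in (2), if $gh$ is epi then so is $g\iota$ (because $\pi$ is epi), so the monomorphism-only hypothesis applied to $\iota$ returns $\iota$ epi, making $h$ a composite of epis. I do not anticipate any real obstacle here: the statement is a purely formal consequence of the definitions of essential and superfluous submodules, and the two halves mirror each other, so the only care needed is keeping the direction of the arrows (and of ``epi'' versus ``mono'') consistent throughout.
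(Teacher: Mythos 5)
Your proof is correct. The paper itself gives no argument for this lemma --- it is quoted verbatim from Anderson--Fuller (Corollaries 5.13 and 5.17) --- and your self-contained verification is exactly the standard textbook argument: testing essentiality against canonical projections $M\twoheadrightarrow M/N$, testing superfluousness of $\ker g$ against inclusions $X\hookrightarrow M$, and passing from the epi-only (resp.\ mono-only) hypothesis to arbitrary homomorphisms via the image factorisation $h=\iota\pi$. All steps check out, so nothing further is needed.
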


\section{Sincere silting modules}

Let $R$ be a (two-side) perfect ring and $T$ be an $R$-module.
    In this section, we will study the characterizations of sincere modules, sincere silting modules and tilting modules in terms of various vanishing conditions.

We begin with considering the following four conditions:

\begin{verse}
{\bf (PT)} $T$ is sincere, i.e., $\mathrm{Hom}_{R}(P,T)\neq 0$ for all non-zero projective modules $P$.

{\bf (TI)} $T$ is cosincere, i.e., $\mathrm{Hom}_{R}(T,I)\neq 0$ for all non-zero injective modules $I$.

{\bf (TS)} For any non-zero simple $R$-module $S$, $S\in$ Subfac$(T)$, i.e., there is a diagram
$T\twoheadrightarrow Y\leftarrowtail S$ for some $Y$.

{\bf (ST)} For any non-zero simple $R$-module $S$, $S\in$ Facsub$(T)$, i.e., there is a diagram
$S\twoheadleftarrow M\rightarrowtail T$ for some $M$.
\end{verse}

\begin{lemma}\label{KT}
There are the following relations between the above four conditions.
	$$\xymatrix@C=1.5cm@R=1.5cm{
		(PT)\ar@2{<->}[r]&(TS)\ar@{=>}[d]\\
		(ST)\ar@2{<->}[r]\ar@{=>}[u]&(TI)
		}$$
\end{lemma}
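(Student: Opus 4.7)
The plan is to establish the four implications shown in the diagram by testing each condition against the simple $R$-modules and invoking the perfect-ring structure of $R$. By Theorem~\ref{AA}, two facts are available throughout: every non-zero $R$-module contains both a maximal submodule (hence admits a simple quotient) and a non-zero socle (hence contains a simple submodule); and every projective $R$-module splits as a direct sum of local projective covers $P_{S}$ of simples $S$, each $P_{S}$ having unique maximal submodule $\mathrm{rad}\,P_{S}$ and top $P_{S}/\mathrm{rad}\,P_{S}\cong S$.

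For $(PT)\Leftrightarrow(TS)$, I would work through the projective covers. Starting from $(PT)$ and a simple $S$, sincerity applied to the non-zero projective $P_{S}$ gives $0\neq f\colon P_{S}\to T$; since $\Ker f$ is a proper submodule of the local module $P_{S}$, it lies in $\mathrm{rad}\,P_{S}$, and taking $N:=f(\mathrm{rad}\,P_{S})\subseteq T$ yields $S\cong f(P_{S})/N\hookrightarrow T/N$, which is $(TS)$. Conversely, given a non-zero projective $P$, I would split off a local summand $P_{S}$; applying $(TS)$ to the associated simple $S$ produces $S\hookrightarrow T/N$, and the projectivity of $P_{S}$ lifts the non-zero composite $P_{S}\twoheadrightarrow S\hookrightarrow T/N$ along the surjection $T\twoheadrightarrow T/N$ to a map $P_{S}\to T$ whose composition with $T\twoheadrightarrow T/N$ is still non-zero, hence itself non-zero; precomposing with $P\twoheadrightarrow P_{S}$ then gives a non-zero element of $\mathrm{Hom}_{R}(P,T)$.

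For $(TI)\Leftrightarrow(ST)$, I would proceed dually through the injective envelopes $E(S)$. If $(TI)$ holds and $S$ is simple, any $0\neq g\colon T\to E(S)$ has image meeting $S$ non-trivially by essentiality, so $S\subseteq g(T)$ and $g^{-1}(S)\subseteq T$ surjects onto $S$, giving $(ST)$. If $(ST)$ holds and $I$ is a non-zero injective, its non-zero socle contains a simple $S$; $(ST)$ provides $M\subseteq T$ with a surjection $M\twoheadrightarrow S$, and the composite $M\twoheadrightarrow S\hookrightarrow I$ extends along $M\hookrightarrow T$ by injectivity of $I$ to a non-zero map $T\to I$. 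The two vertical arrows then follow by the same templates: for $(TS)\Rightarrow(TI)$, given a non-zero injective $I$ with simple submodule $S$, use $(TS)$ to obtain $S\hookrightarrow T/N$, extend $S\hookrightarrow I$ along this inclusion to $T/N\to I$ by injectivity, and precompose with $T\twoheadrightarrow T/N$; for $(ST)\Rightarrow(PT)$, split off a local summand $P_{S}$ of $P$, use $(ST)$ to get $M\subseteq T$ with $M\twoheadrightarrow S$, and lift $P_{S}\twoheadrightarrow S$ through $M$ by projectivity of $P_{S}$, then include into $T$.

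No individual implication is delicate, so the main point to handle with care is the honest invocation of the perfect-ring hypothesis: locality of every projective cover $P_{S}$ (so that every non-zero map out of $P_{S}$ has image surjecting onto $S$) and non-vanishing of socles of arbitrary modules (so that every non-zero injective contains a simple submodule) must both be available without any finiteness assumption on the modules involved. Theorem~\ref{AA} supplies exactly these features for two-side perfect $R$, and once they are used uniformly the arguments reduce cleanly to projective lifting and injective extension along simples.
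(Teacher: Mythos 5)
Your proof is correct and takes essentially the same approach as the paper: each implication is tested against simple modules via projective covers, injective envelopes, projectivity lifting and injectivity extension, with the perfect-ring hypothesis (Theorem \ref{AA}(2),(3)) supplying simple quotients and non-zero socles, and your explicit constructions for $(PT)\Rightarrow(TS)$ and $(TI)\Rightarrow(ST)$ are exactly the paper's pushout and pullback arguments written out on submodules. The only caveat is that the decomposition of an arbitrary projective into local summands $P_{S}$ is not contained in Theorem \ref{AA} as quoted -- it does hold over perfect rings, but it is also avoidable, since Theorem \ref{AA}(2) already yields a simple quotient of any non-zero projective, which is all that $(TS)\Rightarrow(PT)$ and $(ST)\Rightarrow(PT)$ require.
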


\begin{proof}
	(PT) $\Rightarrow$ (TS). Let $S$ be simple. Since $R$ is left perfect, there is a projective cover $\pi :P\twoheadrightarrow S$. Since $T$ satisfies the condition (PT), there is $0\neq f\in \mathrm{Hom}_{R}(P,T)$. Now consider the following commutative diagram, where the below square is a pushout:
	$$\xymatrix{
		0\ar[d]&0\ar[d]\\
		\mathrm{Ker}f\ar[d]_{b}\ar[r]^{c}&\mathrm{Ker}h\ar[d]^{a}\\
		P\ar[d]_{f}\ar@{->>}[r]^{\pi}&S\ar[d]^{h}\\
		T\ar[r]_{g}&Y
	}$$
It is easy to see that $g$ and $c$ are epimorphisms by the properties of the pushout. Thus, $Y\in \mathrm{Gen}T$. Suppose that $h=0$. Then it is clear that
$a:\mathrm{Ker}h\to S$ is an isomorphism. Hence, $ac=\pi b$ is epimorphic. Since $\pi$ is a projective cover, $\pi$ is a minimal epimorphism. Then
one gets that $b$ is epimorphic by Lemma \ref{LC}(2). Since $fb=0$, we have $f=0$, this is a contradiction. So $h\neq 0$.
It is obvious that $h$ is a monomorphism since $S$ is simple. Consequently, $S\in$ Subfac$(T)$.

	(TS) $\Rightarrow$ (PT). Let $P$ be arbitrary projective $R$-module. Noting that $R$ is perfect,
	one sees that there exists a simple module $S$ such that $0\neq \pi^{\prime}:P\twoheadrightarrow S$ is epimorphic by Theorem \ref{AA}(2). Therefore, by (TS) and the projectivity of $P$, there exists a homomorphism $\alpha:P\to T$ such that the following diagram commutates.
	$$\xymatrix{
		P\ar@{-->}[r]^{\alpha}\ar@{->>}[d]_{\pi^{\prime} }&T\ar@{->>}[d]^{\pi}\\
		S\ar@{ >->}[r]_{i}&Y
	}$$
	Because $i$ is a monomorphism, $\pi^{\prime}$ is an epimorphism and $i\pi^{\prime}=\pi \alpha $, we obtain $\alpha \neq 0$. Consequently,
	$\mathrm{Hom}_{R}(P,T)\neq 0$.

	(ST) $\Rightarrow$ (PT). Let $P$ be arbitrary projective $R$-module. As above, there exists a simple module $S$ such that $0\neq \pi^{\prime}:P\twoheadrightarrow S$ is epimorphic.
	The condition (ST) implies that there is a diagram $S\twoheadleftarrow M\rightarrowtail T$ for some $M$. It follows from the projectivity of $P$ that there exists a morphism $h\neq 0$ such that $\pi^{\prime} =gh$.
	$$\xymatrix{
		P\ar@{->>}[d]_{\pi^{\prime} }\ar@{-->}[rd]^{h}& & \\
		S&M\ar@{->>}[l]^{g}\ar@{ >->}[r]_{f}&T.
	}$$
	 It is easy to see that $fh\neq 0$ since $f$ is monic. Therefore, $\mathrm{Hom}_{R}(P,T)\neq 0$.

	(TI) $\Rightarrow$ (ST). For any simple module $S$, there is $i:S\rightarrowtail E(S)$, where $E(S)$ is the injective envelope of $S$. By (TI), there is $0\neq f\in \mathrm{Hom}_{R}(T,E(S))$.
	Now consider the following commutative diagram with top square being a pullback.

	$$\xymatrix{
		M\ar[r]^{g}\ar[d]_{h}&T\ar[d]^{f}\\
		S\ar@{ >->}[r]^{i}\ar[d]_{b}&E(S)\ar[d]^{a}\\
		\mathrm{Coker}h\ar[r]_{c}\ar[d]&\mathrm{Coker}f\ar[d]\\
		0&0
	}$$
	It is easy to check that $g$ and $c$ are monomorphisms by the properties of the pullback. Suppose that $h=0$. Then it is obvious that
	$b:S\to \mathrm{Coker}h$ is an isomorphism. Consequently, $cb=ai$ is monic. Because $i$ is an essential monomorphism,
	we obtain that $a$ is monic by Lemma \ref{LC}(1). Noting that $af=0$, we have $f=0$, a contradiction. So $h\neq 0$.
	It follows that $h$ is an epimorphism since $S$ is simple. Consequently,  $S\in$ Facsub$(T)$.

	(ST) $\Rightarrow$ (TI). Let $I$ be injective. By Theorem \ref{AA}(3), there exists a simple module $S$ such that $0\neq i^{\prime}:S\rightarrowtail I$ is monic.
    The condition (ST) implies that there is a diagram $S\twoheadleftarrow M\rightarrowtail T$ for some $M$.
	Then, by the injectivity of $I$, there exists $\beta $ such that the following diagram commutates.

	$$\xymatrix{
		M\ar@{ >->}[r]^{i}\ar@{->>}[d]_{p}&T\ar@{-->}[d]^{\beta }\\
		S\ar@{ >->}[r]^{i^{\prime}}&I
	}$$
    Thus $i^{\prime}p=\beta i$. Because $p$ is epimorphic and $i^{\prime}$ is monic, we obtain $\beta \neq 0$. Hence,
	$\mathrm{Hom}_{R}(T,I)\neq 0$.

	(TS) $\Rightarrow$ (TI). Let $I$ be arbitrary injective module. As above, there exists a simple module $S$ such that $0\neq i^{\prime}:S\rightarrowtail I$ is monic.
	By the condition (TS), there is a diagram $T\twoheadrightarrow Y\leftarrowtail S$ for some $Y$.
	Noting that $I\in \mathrm{Inj}R$ and $f:S\rightarrowtail Y$ is monic, there exists $h\neq 0$ such that the following diagram commutates.
	$$\xymatrix{
		T\ar@{->>}[r]^{g}&Y\ar@{-->}[rd]_{h}&S\ar[d]^{i^{\prime}}\ar@{ >->}[l]_{f}\\
		 & &I
	}$$
	It is obvious that $hg\neq 0$ since $g$ is epimorphic. Consequently, $\mathrm{Hom}_{R}(T,I)\neq 0$.

\end{proof}

\begin{remark} (1) It always holds that {\rm Subfac}T = {\rm Facsub}T. Thus the conditions (TS) and (ST) are always equivalent.

	(2) Let $\varLambda $ be an Artin algebra and $T$ a finitely generated $\varLambda $-module. Then $T$ is
	sincere is equivalent to the fact that
	every simple module appears as a composition factor in $T$. 
\end{remark}

    In the following, we consider the relations between sincere modules and modules satisfying the vanishing condition $(T3){^{\prime}}$.

\begin{proposition}\label{P}
	If $T$ satisfies the vanishing condition $(T3){^{\prime}}$, then $T$ is sincere.
\end{proposition}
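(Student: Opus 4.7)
The plan is to reduce (PT) to the equivalent condition (TI) via Lemma \ref{KT}, which will make the implication from $(T3)'$ almost tautological. Concretely, Lemma \ref{KT} gives the chain of equivalences $(\text{PT})\Leftrightarrow(\text{TS})\Rightarrow(\text{TI})\Leftrightarrow(\text{ST})\Rightarrow(\text{PT})$, so it suffices to verify that $T$ satisfies (TI), i.e., $\mathrm{Hom}_R(T,I)\neq 0$ for every nonzero injective $R$-module $I$.

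The main point is the observation that injectivity buys the $\mathrm{Ext}^1$-vanishing for free. First I would take an arbitrary nonzero injective $R$-module $I$ and note that $\mathrm{Ext}^1_R(T,I)=0$ automatically. Then, arguing by contradiction, I would assume $\mathrm{Hom}_R(T,I)=0$; combined with the previous vanishing, this places $I$ in $T^{\bot_{0\le i\le 1}}$. By the hypothesis $(T3)'$, namely $T^{\bot_{0\le i\le 1}}=0$, this forces $I=0$, contradicting the choice of $I$. Hence $\mathrm{Hom}_R(T,I)\neq 0$, establishing (TI), and Lemma \ref{KT} then yields (PT), i.e., $T$ is sincere.

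There is essentially no obstacle here: the argument uses only the definitions and the fact that $\mathrm{Ext}^1$ against an injective vanishes. The only non-trivial input is Lemma \ref{KT}, which has already been proved and which converts the ostensibly hard-looking sincerity condition (every nonzero projective admits a nonzero map into $T$) into the much easier cosincerity condition (every nonzero injective receives a nonzero map from $T$), which is tailor-made for $(T3)'$.
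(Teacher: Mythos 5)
Your proof is correct and follows the paper's own argument exactly: both reduce sincerity to the cosincerity condition (TI) via Lemma \ref{KT}, observe that $\mathrm{Ext}^1_R(T,I)=0$ for injective $I$, and conclude that $\mathrm{Hom}_R(T,I)=0$ would force $I\in T^{\bot_{0\le i\le 1}}=0$, a contradiction. Nothing to add.
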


\begin{proof}
	By Lemma \ref{KT}, it suffices to prove that $\mathrm{Hom}_{R}(T,I)\neq 0$ for every non-zero injective module $I$.
	Suppose that there exists $0\neq I\in \mathrm{Inj}R$ such that $\mathrm{Hom}_{R}(T,I)=0$, then
	$I\in T^{\bot_{0\le i\le 1}}=0$, a contradiction. Thus, for any $0\neq I\in \mathrm{Inj}R$, we have
	$\mathrm{Hom}_{R}(T,I)\neq 0$.
\end{proof}

We note that the converse of Proposition \ref{P} is not true in general, as the following example shows.

\begin{example}\rm
	Let $\varLambda $ be an algebra given by the quiver $Q:1\stackrel{\alpha }\longleftarrow 2$. Then $P_{1}=1$ and $P_{2}=\mqty{2\\1}$ are projective modules.
	The AR-quiver of mod$\varLambda $ is as follows:
	$$\xymatrix@C=0.8cm@R=0.2cm{
		 &\mqty{2\\1}\ar[rd]\\
		1\ar[ru]& &2
	}$$

	Let $T=P_{2}=\mqty{2\\1}$. Then it is easy to check that $T$ is sincere, pd$_{\varLambda }T=0\leqslant 1$ and $\mathrm{Ext}^{1}_{\mathcal{\varLambda }}(T,T)=0$.
	But $0\neq P_{1}\in T^{\bot_{0\le i\le 1}}$.

	\vskip 10pt

	Note that it also gives an example of sincere pretilting modules but not tilting modules.
\end{example}

However, the following proposition shows that the converse of Proposition \ref{P} holds under certain conditions.

\begin{proposition}\label{PP}
	Assume that the $R$-module $T$ satisfies $\mathrm{Gen}T=\mathrm{Pres}T$. If $T$ is sincere, then
	$T$ satisfies the vanishing condition $(T3){^{\prime}}$.
\end{proposition}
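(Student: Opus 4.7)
The plan is to argue by contradiction: assuming a non-zero $M\in T^{\bot_{0\le i\le 1}}$, I will use the hypothesis $\mathrm{Gen}T=\mathrm{Pres}T$ to propagate both vanishings from $T$ to every object of $\mathrm{Gen}T$, and then invoke sincerity to locate a simple submodule of $M$ on which $\mathrm{Hom}_{R}(-,M)$ is forced to vanish, contradicting its very presence in $M$.

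First, I would verify that for every $X\in\mathrm{Gen}T$ both $\mathrm{Hom}_{R}(X,M)=0$ and $\mathrm{Ext}^{1}_{R}(X,M)=0$. Both functors convert coproducts in the first variable into products and respect direct summands, so they already vanish on $\mathrm{Add}T$. For a general $X\in\mathrm{Pres}T$, I pick a presentation $T_1\stackrel{f}{\to}T_0\to X\to 0$ with $T_0,T_1\in\mathrm{Add}T$ and set $K=\mathrm{Im}\,f$. Applying $\mathrm{Hom}_{R}(-,M)$ to $0\to K\to T_0\to X\to 0$, the long exact sequence together with $\mathrm{Hom}_{R}(T_0,M)=0=\mathrm{Ext}^{1}_{R}(T_0,M)$ identifies $\mathrm{Ext}^{1}_{R}(X,M)$ with $\mathrm{Hom}_{R}(K,M)$; but the latter embeds into $\mathrm{Hom}_{R}(T_1,M)=0$ via the epimorphism $T_1\twoheadrightarrow K$, giving $\mathrm{Hom}_{R}(X,M)=0$ and $\mathrm{Ext}^{1}_{R}(X,M)=0$. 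The assumption $\mathrm{Gen}T=\mathrm{Pres}T$ then upgrades this to vanishing over all of $\mathrm{Gen}T$.

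Next, assume for contradiction that $M\neq 0$. Since $R$ is two-sided perfect, hence right perfect, the left-module analogue of Theorem \ref{AA}(3) supplies a simple submodule $\iota:S\rightarrowtail M$. Sincerity together with Lemma \ref{KT} gives condition (TS): an epimorphism $T\twoheadrightarrow Y$ and a monomorphism $S\rightarrowtail Y$. Since both $Y$ and $Y/S$ lie in $\mathrm{Gen}T$, the previous step yields $\mathrm{Hom}_{R}(Y,M)=0$ and $\mathrm{Ext}^{1}_{R}(Y/S,M)=0$, and the long exact sequence attached to $0\to S\to Y\to Y/S\to 0$ collapses to force $\mathrm{Hom}_{R}(S,M)=0$. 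This contradicts the non-zero inclusion $\iota$, so $M=0$ and therefore $T^{\bot_{0\le i\le 1}}=0$.

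The only delicate point is the $\mathrm{Ext}^{1}$-propagation in the first step: without the hypothesis $\mathrm{Gen}T=\mathrm{Pres}T$, one only obtains $\mathrm{Ext}^{1}_{R}(X,M)=0$ for $X\in\mathrm{Add}T$, which is too weak to control the factor $Y/S$ arising from an arbitrary quotient $Y$ of $T$. Once the propagation is in place, the remainder is formal long-exact-sequence bookkeeping, and perfectness of $R$ enters only to produce the simple submodule of $M$.
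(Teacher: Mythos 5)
Your proof is correct and follows essentially the same route as the paper: use $\mathrm{Gen}T=\mathrm{Pres}T$ to propagate the $\mathrm{Hom}$- and $\mathrm{Ext}^{1}$-vanishing from $T$ to all of $\mathrm{Gen}T$, then use perfectness of $R$ to find a simple submodule of $M$ and sincerity (via Lemma \ref{KT}) to reach a contradiction from a long exact sequence. The only cosmetic difference is that you invoke the form (TS) of sincerity and the sequence $0\to S\to Y\to Y/S\to 0$, whereas the paper invokes (ST) and the sequence $0\to G\to T\to \mathrm{Coker}\,h\to 0$, contradicting $\mathrm{Hom}_{R}(G,M)=0$ instead of $\mathrm{Hom}_{R}(S,M)=0$.
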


\begin{proof}

    Taken any $M\in T^{\bot_{0\le i\le 1}}$, we first claim that $M\in (\mathrm{Gen}T)^{\bot_{1}}$ if $T$ satisfies the condition
	$\mathrm{Gen}T=\mathrm{Pres}T$.

    Indeed, for any $X\in \mathrm{Gen}T$, there exists a short exact sequence
	$$0\rightarrow X^{\prime}\rightarrow T_{X}\rightarrow X\rightarrow 0$$
	with $T_{X}\in \mathrm{Add}T$ and  $X^{\prime}\in \mathrm{Gen}T$ since $\mathrm{Gen}T=\mathrm{Pres}T$. Note that $\mathrm{Hom}_{R}(T,M)=0$. This implies
	that $\mathrm{Hom}_{R}(\mathrm{Gen}T,M)=0$. Applying $\mathrm{Hom}_{R}(-,M)$ to the above
	short exact sequence, we obtain the following long exact sequence
	$$\cdots\rightarrow \mathrm{Hom}_{R}(X^{\prime},M)\rightarrow \mathrm{Ext}_{R}^{1}(X,M)\rightarrow
	\mathrm{Ext}_{R}^{1}(T_{X},M)\rightarrow \cdots.$$
	Because $\mathrm{Hom}_{R}(X^{\prime},M)=0$ and $\mathrm{Ext}_{R}^{1}(T_{X},M)=0$, it
	follows that $\mathrm{Ext}_{R}^{1}(X,M)=0$.

	Now suppose that $M\neq 0$. Then there exists a simple module $S$ with $g:S\rightarrowtail M$ by Theorem \ref{AA}(3).
	Noting that $T$ is sincere, we have the following diagram, where $f$ and $h$ are promised by the condition (ST).

	$$\xymatrix{
		0\ar[r]&G\ar@{ >->}[r]^{h}\ar@{->>}[d]_{f}&T\ar[r]^(0.3){\pi}&C:=\mathrm{Coker}h\ar[r]&0\\
		 &S\ar@{ >->}[d]_{g}& & & \\
		 &M& & &
	}$$
	Applying $\mathrm{Hom}_{R}(-,M)$ to the short exact sequence $0\rightarrow G\rightarrow T\rightarrow C\rightarrow 0$, we get the long exact sequence
	$$0\rightarrow \mathrm{Hom}_{R}(C,M)\rightarrow \mathrm{Hom}_{R}(T,M)\rightarrow \mathrm{Hom}_{R}(G,M)\rightarrow \mathrm{Ext}_{R}^{1}(C,M)\rightarrow \cdots.$$
	Note that $\mathrm{Hom}_{R}(T,M)=0$ by assumption and that $\mathrm{Ext}_{R}^{1}(C,M)=0$, since $C\in \mathrm{Gen}T$ and $M\in (\mathrm{Gen}T)^{\bot_{1}}$.
	Therefore $\mathrm{Hom}_{R}(G,M)=0$ and then $gf=0$. On the other hand, since $g$ is monic and $f$ is epimorphic, we also have $gf\neq 0$, a contradiction. Consequently, $M=0$.
\end{proof}

Since a silting module $T$ always satisfies the condition $\mathrm{Gen}T=\mathrm{Pres}T$, we have the following corollary.

\begin{corollary}\label{C}
    Let $T$ be a silting $R$-module. Then $T$ is sincere if and only if $T$ satisfies the vanishing condition $(T3){^{\prime}}$.
\end{corollary}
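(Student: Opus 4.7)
The plan is to derive the corollary as a direct consequence of Propositions 3.3 and 3.4. The only external ingredient is the standard fact, invoked explicitly in the sentence preceding the corollary, that every silting module $T$ satisfies $\mathrm{Gen}T = \mathrm{Pres}T$.

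For the ``if'' direction, suppose $T^{\bot_{0\le i\le 1}} = 0$. Then Proposition 3.3 immediately yields sincerity of $T$, and this step makes no use of the silting hypothesis whatsoever. For the ``only if'' direction, suppose $T$ is silting and sincere. The equality $\mathrm{Gen}T = \mathrm{Pres}T$, guaranteed by siltingness, verifies the hypothesis of Proposition 3.4, which then delivers $T^{\bot_{0\le i\le 1}} = 0$. So both directions are formal, and the silting hypothesis enters purely through its role of upgrading sincere modules to modules satisfying $\mathrm{Gen}T = \mathrm{Pres}T$.

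There is essentially no obstacle in this corollary. The one non-trivial ingredient, namely $\mathrm{Gen}T = \mathrm{Pres}T$ for silting $T$, is not new and is already a standard part of the silting-module theory of Angeleri-Hügel, Marks and Vitória: given $X \in \mathrm{Gen}T = D_\sigma$, one chooses an epimorphism $T^{(I)} \twoheadrightarrow X$ and uses the silting torsion pair $(\mathrm{Gen}T, T^{\bot_0})$ together with the properties of $D_\sigma$ from Lemma 2.2 to show that its kernel lies again in $\mathrm{Gen}T$; a second epimorphism from a coproduct of copies of $T$ onto this kernel then yields the required presentation $T_1 \to T_0 \to X \to 0$ with $T_0, T_1 \in \mathrm{Add}T$. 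Once this is recalled, the corollary is merely a clean bookkeeping step combining the two propositions just established.
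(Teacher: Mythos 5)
Your proof is correct and follows essentially the same route as the paper, which likewise deduces the corollary by combining Proposition \ref{P} (the vanishing condition $(T3)'$ implies sincerity) with Proposition \ref{PP} (sincerity plus $\mathrm{Gen}T=\mathrm{Pres}T$ implies $(T3)'$), using the standard fact that silting modules satisfy $\mathrm{Gen}T=\mathrm{Pres}T$. Only note that the second proposition you invoke is Proposition \ref{PP} (numbered 3.5 in the paper, not 3.4); the content you use is the right one.
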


The following result provides a generalization of the characterization of tilting modules in terms of the vanishing condition $(T3)^{\prime}$ to sincere silting modules.

\begin{theorem}\label{THM}
	$T$ is a sincere silting $R$-module if and only if $T$ is a presilting module and $T^{\bot_{0\le i\le 1}}=0$.
\end{theorem}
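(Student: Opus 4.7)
The plan is to split the equivalence into two implications: the forward direction is essentially a bookkeeping consequence of Corollary~\ref{C}, while the backward direction contains all the substance.

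For the forward direction, I would start from a sincere silting module $T$ with respect to some projective presentation $\sigma$, so $\mathrm{Gen}T = D_\sigma$. The inclusion $\mathrm{Gen}T \subseteq D_\sigma$ is exactly the presilting condition with respect to $\sigma$. The other desired conclusion, $T^{\bot_{0\le i\le 1}} = 0$, is immediate from Corollary~\ref{C}, since $T$ is simultaneously silting and sincere.

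For the backward direction, assume $T$ is presilting with respect to some $\sigma$ and $T^{\bot_{0\le i\le 1}} = 0$. Sincerity is free: Proposition~\ref{P} turns the vanishing condition into the condition (PT). What remains is to upgrade ``presilting'' to ``silting'' with respect to the same $\sigma$, i.e., to produce the reverse inclusion $D_\sigma \subseteq \mathrm{Gen}T$. For this, given any $X \in D_\sigma$, I would form the trace quotient $Y := X/\tau_T(X)$, where $\tau_T(X)$ is the sum of the images of all homomorphisms $T \to X$. By construction $\mathrm{Hom}_R(T,Y) = 0$; since $Y$ is an epimorphic image of $X \in D_\sigma$, Lemma~\ref{D}(1) gives $Y \in D_\sigma$; and Lemma~\ref{D}(2) then gives $\mathrm{Ext}^1_R(T,Y) = 0$. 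Hence $Y \in T^{\bot_{0\le i\le 1}} = 0$, so $Y = 0$ and $X = \tau_T(X) \in \mathrm{Gen}T$, as required.

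The main obstacle I anticipate is more conceptual than technical: one must resist the temptation to route the proof through Proposition~\ref{PP}, whose hypothesis $\mathrm{Gen}T = \mathrm{Pres}T$ is known for silting modules but is not yet available here (so invoking it would be circular). The trace/quotient trick described above cleanly sidesteps this circularity by exploiting the two complementary features of $D_\sigma$ furnished by Lemma~\ref{D}, namely closure under epimorphic images together with the inclusion $D_\sigma \subseteq T^{\bot_1}$; once both are in hand, no higher $\mathrm{Ext}$ vanishing is needed and the argument closes in one step.
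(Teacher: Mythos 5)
Your overall route coincides with the paper's: the forward direction via Corollary~\ref{C}, and the backward direction via Proposition~\ref{P} plus passing, for $X\in D_{\sigma}$, to the quotient by its trace --- which is exactly the canonical sequence of the torsion pair $(\mathrm{Gen}T,\,T^{\bot_{0}})$ that the paper uses. The gap is the sentence ``by construction $\mathrm{Hom}_R(T,Y)=0$''. For a general module $T$ this is simply false: having $\mathrm{Hom}_R(T,\,X/\tau_T(X))=0$ for all $X$ is equivalent to $\mathrm{Gen}T$ being closed under extensions (i.e.\ to the trace preradical being a radical). For instance, over $R=k[x]/(x^2)$ with $T=R/(x)$ and $X=R$ one has $X/\tau_T(X)\cong T$, so the Hom-group is nonzero. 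So this step is precisely where the presilting hypothesis has to be used a second time; it is not a formal consequence of the trace construction.

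The repair is short and is essentially what the paper does. From $\mathrm{Gen}T\subseteq D_{\sigma}\subseteq T^{\bot_{1}}$ (Lemma~\ref{D}(2), or Lemma~\ref{A} using that $R$ is perfect), \cite[Lemma 2.3]{AMV} gives that $(\mathrm{Gen}T,\,T^{\bot_{0}})$ is a torsion pair, whence $\mathrm{Hom}_R(T,\,X/\tau_T(X))=0$. Alternatively, argue directly: given $f\colon T\to Y:=X/\tau_T(X)$, pull back $0\to \tau_T(X)\to X\to Y\to 0$ along $f$; since $\tau_T(X)\in\mathrm{Gen}T\subseteq T^{\bot_{1}}$, the resulting extension of $T$ by $\tau_T(X)$ splits, so $f$ lifts to a map $T\to X$ whose image lies in the trace, forcing $f=0$. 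With this inserted, your argument closes exactly as the paper's does (your concern about circularity through Proposition~\ref{PP} is moot, since the paper does not use it in this direction either).
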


\begin{proof}
	$\Rightarrow )$ The assertion follows from the definition of silting modules and Corollary \ref{C}.

	$\Leftarrow )$ The condition $T^{\bot_{0\le i\le 1}}=0$ implies that $T$ is sincere by Proposition \ref{P}. Because $T$ is presilting,
    there exists a projective presentation
	$$P_{1}\stackrel{\sigma }\rightarrow P_{0}\rightarrow T\rightarrow 0$$
	such that $\mathrm{Gen}T\subseteq D_{\sigma}$. It suffices to show that $D_{\sigma}\subseteq \mathrm{Gen}T$.

    Noting that $R$ is a perfect ring, we obtain that
    $\mathrm{Gen}T\subseteq T^{\bot_{1}}$ by
    Lemma \ref{A}. Then it follows from \cite[Lemma 2.3]{AMV} that ($\mathrm{Gen}T,T^{\bot_{0}}$)
	is a torsion pair. Thus, for any $K\in D_{\sigma}$, we get the following canonical exact sequence
	$$0\rightarrow G\rightarrow K\rightarrow X\rightarrow 0$$
    with $G\in \mathrm{Gen}T$, $X\in T^{\bot_{0}}$. Because $K\in D_{\sigma}$ and $D_{\sigma}$ is closed under
    epimorphic images, we also get that
	$X\in D_{\sigma}\subseteq T^{\bot_{1}}$. So $X\in T^{\bot_{0\le i\le 1}}=0$.
    Consequently,
	$K\simeq G\in \mathrm{Gen}T$ and the proof is completed.
\end{proof}

In the following, we study characterizations of tilting modules in terms of various vanishing conditions.
    We note first that there is the following characterization of sincere modules.

\begin{proposition}\label{L}
	An $R$-module $T$ is sincere if and only if $^{\bot_{0}}(\mathrm{Gen}T)=0$.
\end{proposition}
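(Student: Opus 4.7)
The plan is to prove both directions directly, using the equivalence of (PT) and (TS) from Lemma \ref{KT} for the forward direction, and the lifting property of projectives for the converse.

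For the forward direction, I would take an arbitrary $M \in {}^{\bot_{0}}(\mathrm{Gen}T)$ and derive a contradiction from the assumption $M \neq 0$. Because $R$ is left perfect, Theorem \ref{AA}(2) guarantees that $M$ has a maximal submodule, producing a simple quotient $q\colon M \twoheadrightarrow S$. Since $T$ is sincere, Lemma \ref{KT} tells us condition (TS) holds, supplying a diagram $T \twoheadrightarrow Y \leftarrowtail S$ with $Y \in \mathrm{Gen}T$. The composition of $q$ with the monomorphism $S \rightarrowtail Y$ is a nonzero homomorphism $M \to Y$, contradicting $M \in {}^{\bot_{0}}(\mathrm{Gen}T)$. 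Hence $M = 0$.

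For the converse, assuming ${}^{\bot_{0}}(\mathrm{Gen}T) = 0$, I would pick an arbitrary nonzero projective $P$. Then $P$ cannot lie in ${}^{\bot_{0}}(\mathrm{Gen}T)$, so there exist $X \in \mathrm{Gen}T$ and a nonzero map $f\colon P \to X$. Realising $X$ as a quotient of a direct sum $T^{(J)}$ via an epimorphism $\pi$, the projectivity of $P$ lifts $f$ to a nonzero $\widetilde{f}\colon P \to T^{(J)}$; at least one coordinate projection $p_{j}\widetilde{f}\colon P \to T$ is therefore nonzero, so $\mathrm{Hom}_{R}(P,T) \neq 0$ and $T$ is sincere.

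I do not anticipate any serious obstacle: the argument is essentially a direct unpacking of definitions, with Lemma \ref{KT}, Theorem \ref{AA}(2), and the universal property of projectives as the external inputs. The only point requiring a moment's care is verifying that the composed map $M \twoheadrightarrow S \rightarrowtail Y$ in the forward direction is genuinely nonzero, which is immediate since $S$ is a nonzero simple module and $S \rightarrowtail Y$ is monic.
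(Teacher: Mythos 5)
Your proof is correct and follows essentially the same route as the paper: the forward direction uses Lemma \ref{KT} (condition (TS)) together with Theorem \ref{AA} to pass through a simple module, and the converse lifts a nonzero map $P\to X$ through the epimorphism $T^{(J)}\twoheadrightarrow X$ by projectivity. The only differences are cosmetic (you argue by contradiction via a simple quotient of $M$ and by coordinate projections out of $T^{(J)}$, where the paper argues contrapositively and uses $\mathrm{Hom}_R(P,T^{(J)})\subseteq \mathrm{Hom}_R(P,T)^J$), so no substantive gap remains.
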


\begin{proof}
	$\Leftarrow)$ Assume that $^{\bot_{0}}(\mathrm{Gen}T)=0$. Let $P$ be arbitrary projective $R$-module such that $\mathrm{Hom}_{R}(P,T)=0$. We only need to
    prove that $P=0$, by Lemma \ref{KT}.
	Indeed, for any $X\in \mathrm{Gen}T$, there is an exact sequence $T^{(J)}\rightarrow X\rightarrow 0$. Using that
	$P\in \mathrm{Proj}(R)$, we obtain that $\mathrm{Hom}_{R}(P,T^{(J)})\rightarrow \mathrm{Hom}_{R}(P,X)\rightarrow 0$
	is exact. But $\mathrm{Hom}_{R}(P,T^{(J)})\subseteq \mathrm{Hom}_{R}(P,T^{J})\simeq\mathrm{Hom}_{R}(P,T)^{J}=0$, 
	it follows that $\mathrm{Hom}_{R}(P,X)=0$, i.e., $P\in {^{\bot_{0}}}(\mathrm{Gen}T)$. Hence $P=0$.

	$\Rightarrow)$ Because $T$ is sincere, for any simple module $S$, we have a diagram $T\twoheadrightarrow Y\leftarrowtail S$ for some $Y$ by Lemma \ref{KT}.
	Thus $Y\in \mathrm{Gen}T$. For any $X\in {^{\bot_{0}}}(\mathrm{Gen}T)$, we get an exact sequence
	$0\rightarrow \mathrm{Hom}_{R}(X,S)\rightarrow \mathrm{Hom}_{R}(X,Y)$. It is obvious that $\mathrm{Hom}_{R}(X,S)=0$
	since $\mathrm{Hom}_{R}(X,Y)=0$. This shows that the socle of $X$ is zero. Therefore,  $X=0$ by Theorem \ref{AA}.
\end{proof}

The above result helps us get the following characterization of pretilting modules in case they are sincere.

\begin{proposition}\label{M}
	Let $T$ be a sincere $R$-module. Then $T$ is pretilting
	if and only if $\mathrm{Gen}T\subseteq T^{\bot_{1\le i\le 2}}$.
\end{proposition}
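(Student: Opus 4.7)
I will prove the two implications separately. The forward direction is standard: if $T$ is pretilting, then $\mathrm{pd}_R T\le 1$ immediately gives $\mathrm{Gen}T\subseteq T^{\bot_2}$, and for any $X\in\mathrm{Gen}T$ one chooses a surjection $T^{(J)}\twoheadrightarrow X$ with kernel $N$; in the long exact sequence the piece $\mathrm{Ext}^1_R(T,T^{(J)})\to\mathrm{Ext}^1_R(T,X)\to\mathrm{Ext}^2_R(T,N)$ has both outer terms vanishing by (T2) and (T1), forcing $\mathrm{Ext}^1_R(T,X)=0$.

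For the converse, assume $T$ is sincere and $\mathrm{Gen}T\subseteq T^{\bot_{1\le i\le 2}}$. Condition (T2) is free because $T^{(J)}\in\mathrm{Gen}T$, so everything reduces to establishing (T1), i.e.\ $\mathrm{pd}_R T\le 1$. My strategy is first to reduce this to showing $\mathrm{Ext}^2_R(T,S)=0$ for every simple $R$-module $S$. Take a minimal projective resolution $\cdots\to P_1\to P_0\to T\to 0$, which exists since $R$ is perfect, and set $K=\ker(P_0\to T)$, $L=\ker(P_1\to P_0)$. Then $\mathrm{pd}_R T\le 1$ iff $L=0$. Minimality forces $L$ to be superfluous in $P_1$, hence $L\subseteq JP_1$; since $JS=0$ for every simple $S$, each morphism $P_1\to S$ kills $L$. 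Applying $\mathrm{Hom}_R(-,S)$ to $0\to L\to P_1\to K\to 0$ therefore yields $\mathrm{Hom}_R(L,S)\cong\mathrm{Ext}^1_R(K,S)\cong\mathrm{Ext}^2_R(T,S)$, and by Theorem \ref{AA}(2), $L=0$ iff $\mathrm{Hom}_R(L,S)=0$ for every simple $S$; this completes the reduction.

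For each simple $S$ I then dichotomize on whether $\mathrm{Hom}_R(T,S)\ne 0$. When it is, simplicity of $S$ forces $S\in\mathrm{Gen}T$, so $\mathrm{Ext}^2_R(T,S)=0$ by hypothesis. Otherwise, sincerity together with Lemma \ref{KT} (condition (TS)) supplies $Y\in\mathrm{Gen}T$ with $S\hookrightarrow Y$; the quotient $Y/S$ also lies in $\mathrm{Gen}T$, and the long exact sequence of $\mathrm{Ext}_R(T,-)$ arising from $0\to S\to Y\to Y/S\to 0$ sandwiches $\mathrm{Ext}^2_R(T,S)$ between $\mathrm{Ext}^1_R(T,Y/S)=0$ and $\mathrm{Ext}^2_R(T,Y)=0$, giving the desired vanishing. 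The chief obstacle is the reduction step: it crucially uses both the minimality of the projective resolution (so that $L\subseteq JP_1$) and the perfectness of $R$ (so that simple quotients detect nonvanishing of $L$). Once the problem is localized to simples, sincerity provides the bridge to $\mathrm{Gen}T$ through the subfactor construction.
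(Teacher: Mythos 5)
Your proposal is correct, but it proves the converse by a genuinely different route than the paper. The paper deduces from $\mathrm{Gen}T\subseteq T^{\bot_1}$ and perfectness (via Lemma \ref{A}) that $T$ is presilting with respect to some presentation $P_1\stackrel{\sigma}{\to}P_0\to T\to 0$, then uses surjectivity of $\mathrm{Hom}_R(\sigma,X)$ for $X\in\mathrm{Gen}T$ to show that every map $\mathrm{Ker}\sigma\to X$ vanishes, i.e.\ $\mathrm{Ker}\sigma\in{^{\bot_0}}(\mathrm{Gen}T)$, and finally kills $\mathrm{Ker}\sigma$ by Proposition \ref{L} (sincerity $\Leftrightarrow$ ${^{\bot_0}}(\mathrm{Gen}T)=0$); note that the $\mathrm{Ext}^2$-hypothesis enters only to get $\mathrm{Ext}^1_R(\mathrm{Im}\sigma,X)=0$. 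You instead bypass the silting machinery ($D_\sigma$, Lemma \ref{A}, Proposition \ref{L}) entirely: you reduce $\mathrm{Pd}T\le 1$ to the vanishing of $\mathrm{Ext}^2_R(T,S)$ on simples, using a minimal projective resolution over the perfect ring, the inclusion of the superfluous kernel $L$ in $JP_1$, and the fact that nonzero modules over perfect rings admit simple quotients (Theorem \ref{AA}(2)); then sincerity, through the (TS) condition of Lemma \ref{KT}, embeds each simple $S$ into some $Y\in\mathrm{Gen}T$ with $Y/S\in\mathrm{Gen}T$, and the long exact sequence sandwiches $\mathrm{Ext}^2_R(T,S)$ between $\mathrm{Ext}^1_R(T,Y/S)=0$ and $\mathrm{Ext}^2_R(T,Y)=0$. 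All steps check out (your dichotomy on $\mathrm{Hom}_R(T,S)\neq 0$ is actually unnecessary, since the subfactor argument handles every simple). The trade-off: the paper's proof is modular within its silting framework and reuses Proposition \ref{L}, while yours is more elementary and self-contained, relying only on Lemma \ref{KT} and standard facts about projective covers and radicals, and avoiding the external result quoted as Lemma \ref{A}.
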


\begin{proof}
	$\Rightarrow )$ Easily.

	$\Leftarrow )$ Obviously it suffices to prove that $\mathrm{Pd}T\leq 1$. Noting that $R$ is a perfect ring, the condition $\mathrm{Gen}T\subseteq T^{\bot_{1\le i\le 2}}$
	implies that $T$ is presilting by Lemma \ref{A}. Then there exists a projective presentation
	$P_{1}\stackrel{\sigma}\rightarrow P_{0}\rightarrow T\rightarrow 0$ such that $\mathrm{Gen}T\subseteq D_{\sigma}$.
	Put $K:=\mathrm{Ker}\sigma$. It is easy to see that there are two short exact sequences
	\begin{equation*}
		0\rightarrow \mathrm{Im}\sigma \rightarrow P_{0}\rightarrow  T\rightarrow 0, \tag{1}
	\end{equation*}
	\begin{equation*}
		0\rightarrow K\rightarrow P_{1}\rightarrow \mathrm{Im}\sigma \rightarrow 0. \tag{2}
	\end{equation*}
	For any $X\in \mathrm{Gen}T$, applying $\mathrm{Hom}_{R}(-,X)$ to $(1)$, we get a long exact sequence
	$$\cdots\rightarrow \mathrm{Ext}^{1}_{R}(P_{0},X)\rightarrow \mathrm{Ext}^{1}_{R}(\mathrm{Im}\sigma,X)\rightarrow \mathrm{Ext}^{2}_{R}(T,X)\rightarrow \cdots.$$
	Noting that $\mathrm{Ext}^{1}_{R}(P_{0},X)=0$ and $\mathrm{Ext}^{2}_{R}(T,X)=0$ by assumption, we have that $\mathrm{Ext}^{1}_{R}(\mathrm{Im}\sigma,X)=0$. On the other hand, applying $\mathrm{Hom}_{R}(-,X)$ to $(2)$, we get a short exact sequence
	$$0\rightarrow \mathrm{Hom}_{R}(\mathrm{Im}\sigma,X)\rightarrow \mathrm{Hom}_{R}(P_{1},X)\rightarrow \mathrm{Hom}_{R}(K,X)\rightarrow \mathrm{Ext}^{1}_{R}(\mathrm{Im}\sigma,X)=0.$$
	Now we consider the following diagram
	$$\xymatrix{
		0\ar[r]&K\ar[r]^{i}\ar[d]_{f}&P_{1}\ar[r]^{\sigma}\ar@{-->}[ld]_{g}&P_{0}\ar[r]\ar@{-->}[lld]^{h}&T\ar[r]&0\\
		 &X& & & &
	}$$
	For any $f\in \mathrm{Hom}_{R}(K,X)$, there exists $g\in \mathrm{Hom}_{R}(P_{1},X)$ such that $f=gi$. Because
	$X\in \mathrm{Gen}T\subseteq D_{\sigma}$, we obtain that $\mathrm{Hom}_{R}(\sigma,X)$ is surjective. Then there is a morphism $h\in \mathrm{Hom}_{R}(P_{0},X)$
	such that $g=h\sigma$. It follows that $f=gi=h\sigma i=0$. This shows that $K\in {^{\bot_{0}}(\mathrm{Gen}T)}$. Consequently, $K=0$ by Proposition \ref{L} and then
    $\mathrm{Pd}T\leq 1$.
\end{proof}

In particular, we have the following corollary.

\begin{corollary}
	Let $T$ be a sincere presilting $R$-module. Then $T$ is a pretilting module if and only if
	$\mathrm{Gen}T\subseteq T^{\bot_2}$.
\end{corollary}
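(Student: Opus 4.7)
The plan is to deduce this corollary directly from Proposition \ref{M} together with Lemma \ref{A}, using the presilting hypothesis to absorb the $\mathrm{Ext}^{1}$-vanishing condition appearing in Proposition \ref{M}. Since no new ingredients beyond these two earlier results are needed, I do not expect a genuine obstacle; the only content lies in checking that the hypotheses line up.

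For the ``only if'' direction, I would simply invoke condition $(T1)$ of Definition \ref{T1}: a pretilting module $T$ has $\mathrm{Pd}T\le 1$, which forces $\mathrm{Ext}_{R}^{2}(T,-)=0$ on all of $\mathrm{Mod}R$, so $\mathrm{Gen}T\subseteq T^{\bot_{2}}$ holds trivially. For the ``if'' direction, I would first observe that since $R$ is perfect, every $R$-module (in particular the image of any projective presentation of $T$) admits a projective cover, so Lemma \ref{A} applies to the presilting module $T$ and yields $\mathrm{Gen}T\subseteq T^{\bot_{1}}$. Combining this with the standing hypothesis $\mathrm{Gen}T\subseteq T^{\bot_{2}}$ produces $\mathrm{Gen}T\subseteq T^{\bot_{1\le i\le 2}}$. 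Proposition \ref{M}, applied to the sincere module $T$, then immediately concludes that $T$ is pretilting.

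The key observation, if one can call it that, is that the presilting hypothesis is exactly what lets us upgrade a lone $\mathrm{Ext}^{2}$-vanishing on $\mathrm{Gen}T$ to the full $\mathrm{Ext}^{1\le i\le 2}$-vanishing required by Proposition \ref{M}; beyond this, the argument is purely mechanical and involves no further case analysis.
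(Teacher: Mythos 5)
Your proposal is correct and follows essentially the same route as the paper: both directions reduce to Proposition \ref{M} after using the presilting hypothesis to supply $\mathrm{Gen}T\subseteq T^{\bot_1}$. The only cosmetic difference is that you obtain this inclusion via Lemma \ref{A} (using that $R$ is perfect, as the paper itself does in Theorem \ref{THM}), whereas the paper's proof of this corollary cites the chain $\mathrm{Gen}T\subseteq D_{\sigma}\subseteq T^{\bot_1}$ from Lemma \ref{D}; both are valid.
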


\begin{proof}
	$\Rightarrow )$ It is obvious.

	$\Leftarrow )$ Since $T$ is a presilting module, one gets $\mathrm{Gen}T\subseteq D_{\sigma}\subseteq T^{\bot_1}$. 
	Thus the assertion follows from Proposition \ref{M}.
\end{proof}

Now we can characterize tilting modules as follows.

\begin{theorem}\label{TM}
	$T$ is a tilting $R$-module if and only if $T^{\bot_{0\le i\le 1}}=0$ and
	$\mathrm{Gen}T\subseteq T^{\bot_{1\le i\le2}}$.
\end{theorem}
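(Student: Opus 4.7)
The plan is to get this theorem essentially as a corollary of Propositions \ref{P} and \ref{M} together with the vanishing-condition characterization of tilting modules from Definition \ref{T1}, namely that $T$ is tilting iff $T$ satisfies $(T1)$, $(T2)$ and $(T3)'$.

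For the forward direction, assume $T$ is tilting. Then $(T3)'$ is built into the definition, giving $T^{\bot_{0\le i\le 1}}=0$ immediately. Being tilting also means $\mathrm{Gen}T=T^{\bot_1}$, which already includes $\mathrm{Gen}T\subseteq T^{\bot_1}$; and since $\mathrm{Pd}T\le 1$ by $(T1)$, we automatically have $\mathrm{Ext}^2_R(T,-)=0$, so $\mathrm{Gen}T\subseteq T^{\bot_{1\le i\le 2}}$. This half is just unpacking definitions.

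For the backward direction, assume $T^{\bot_{0\le i\le 1}}=0$ and $\mathrm{Gen}T\subseteq T^{\bot_{1\le i\le 2}}$. The first assumption gives $(T3)'$ for free, and by Proposition \ref{P} it also tells us $T$ is sincere. Now I can feed this sincerity together with the hypothesis $\mathrm{Gen}T\subseteq T^{\bot_{1\le i\le 2}}$ into Proposition \ref{M} to conclude that $T$ is pretilting, i.e., $T$ satisfies $(T1)$ and $(T2)$. Combining $(T1)$, $(T2)$ and $(T3)'$ yields tilting via Definition \ref{T1}.

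Because the serious work has already been done in Propositions \ref{P} and \ref{M}, there is no real obstacle left: the only thing to be careful about is to cite both propositions in the correct order (sincerity first, then pretilting), and to observe that no extra step is needed to bridge pretilting plus $(T3)'$ to tilting, since this equivalence is exactly the content of Definition \ref{T1} as recorded in the preliminaries.
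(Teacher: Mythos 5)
Your proposal is correct and follows essentially the same route as the paper: the backward direction uses Proposition \ref{P} to get sincerity, then Proposition \ref{M} to get pretilting, and concludes via the $(T1)$--$(T2)$--$(T3)'$ characterization in Definition \ref{T1}, while the forward direction is the same routine unpacking the paper leaves as ``clear.''
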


\begin{proof}
	$\Rightarrow )$ It is clear.

	$\Leftarrow )$ The condition $T^{\bot_{0\le i\le 1}}=0$ implies that $T$ is sincere by Proposition \ref{P}.
	Then $T$ is pretilting by Proposition \ref{M}. It follows that $T$ is a tilting module by Definition \ref{T1}.
\end{proof}

The following is an easy corollary.

\begin{corollary}\label{T}
    An $R$-module $T$ is tilting if and only if $T$ is sincere silting satisfying the condition
	$\mathrm{Gen}T\subseteq T^{\bot_2}$.
\end{corollary}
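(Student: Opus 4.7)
The plan is to derive this corollary directly by combining Theorem \ref{TM} with Theorem \ref{THM}, so the proof should be quite short. The two characterizations we need to reconcile are: tilting means $T^{\bot_{0\le i\le 1}}=0$ together with $\mathrm{Gen}T\subseteq T^{\bot_{1\le i\le 2}}$, and sincere silting means presilting together with $T^{\bot_{0\le i\le 1}}=0$. Thus the work is to match the two halves of Theorem \ref{TM} against the silting/sincereness hypothesis and a single extra vanishing $\mathrm{Gen}T\subseteq T^{\bot_2}$.

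For the forward direction I would argue as follows. Assuming $T$ is tilting, condition $(T1)$ gives $\mathrm{Pd}T\le 1$, so $\mathrm{Ext}^{2}_{R}(T,-)=0$ on all of $\mathrm{Mod}R$, and in particular $\mathrm{Gen}T\subseteq T^{\bot_{2}}$. Moreover, the equivalent formulation $(T3)'$ in Definition \ref{T1} gives $T^{\bot_{0\le i\le 1}}=0$, and of course a tilting module is silting; thus by Corollary \ref{C} it is sincere silting.

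For the backward direction, assume $T$ is sincere silting with $\mathrm{Gen}T\subseteq T^{\bot_{2}}$. By Theorem \ref{THM} (or Corollary \ref{C}) we get $T^{\bot_{0\le i\le 1}}=0$, which is the first hypothesis of Theorem \ref{TM}. Since $T$ is silting, it is in particular presilting, so Lemma \ref{A} (applicable because $R$ is perfect) yields $\mathrm{Gen}T\subseteq T^{\bot_{1}}$. Combining this with the standing assumption $\mathrm{Gen}T\subseteq T^{\bot_{2}}$ gives $\mathrm{Gen}T\subseteq T^{\bot_{1\le i\le 2}}$, which is the second hypothesis of Theorem \ref{TM}. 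Applying that theorem concludes that $T$ is tilting.

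There is no real obstacle here; the only subtlety is making sure that the inclusion $\mathrm{Gen}T\subseteq T^{\bot_{1}}$, needed to upgrade the assumed $\mathrm{Gen}T\subseteq T^{\bot_{2}}$ to $\mathrm{Gen}T\subseteq T^{\bot_{1\le i\le 2}}$, is available. This is exactly where the perfectness of $R$ (via Lemma \ref{A}) and the presilting property of $T$ combine; once that observation is made, the corollary is a two-line consequence of Theorem \ref{TM} and Theorem \ref{THM}.
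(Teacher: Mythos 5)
Your proof is correct and follows essentially the same route as the paper: the forward direction is immediate from the definition of tilting, and the backward direction combines $T^{\bot_{0\le i\le 1}}=0$ (Corollary \ref{C}/Theorem \ref{THM}) with $\mathrm{Gen}T\subseteq T^{\bot_1}$ and the hypothesis $\mathrm{Gen}T\subseteq T^{\bot_2}$ to apply Theorem \ref{TM}. The only cosmetic difference is that the paper obtains $\mathrm{Gen}T\subseteq T^{\bot_1}$ directly from the silting definition $\mathrm{Gen}T=D_\sigma$ and Lemma \ref{D}(2), whereas you route it through Lemma \ref{A}; both are valid.
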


\begin{proof}
	$\Rightarrow)$ It is obvious by the definition of tilting modules.

	$\Leftarrow)$ Since $T$ is silting, one gets that $\mathrm{Gen}T\subseteq T^{\bot_1}$ by  the definition and Lemma \ref{D}. 
    Then the assertion follows from Corollary \ref{C} and Theorem \ref{TM}.
\end{proof}

\section{An application}

Recently, Zhang \cite{Z} considered the relations between tilting modules and self-orthogonal $\tau$-tilting modules over Arin algebras. He proved that a (finitely generated) self-orthogonal $\tau$-tilting module of finite projective dimension is tilting. This indeed provides a partial answer to the old rank question for tilting modules, first asked by Rickard and Schofield in \cite{RS}. In this short section, we will apply our results to obtain a generalization of Zhang's result to infinitely generated case. Moreover, since Zhang's proof heavily depends on the functor $\tau$ and Auslander-Reiten formula, it couldn't be transferred to infinitely generated case. Thus, our result also provides a new proof of his result, avoid using the functor $\tau$ and Auslander-Reiten formula.

\begin{lemma}\label{LL}
	Let $T$ be a self-orthogonal $R$-module, i.e., $\mathrm{Ext}^{i}_{R}(T,T^{(J)})=0$ for any set $J$ and integer $i\geqslant 1$, of finite projective dimension.
    If $\mathrm{Gen}T = \mathrm{Pres}T$,	then $\mathrm{Gen}T\subseteq T^{\bot_{i\ge 1}}$.
\end{lemma}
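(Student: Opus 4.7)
The plan is to reduce the vanishing of $\mathrm{Ext}^i_R(T,X)$ for $X\in\mathrm{Gen}T$ and $i\geq 1$ to a dimension shift argument, using the hypothesis $\mathrm{Gen}T=\mathrm{Pres}T$ to iterate the shift and the finiteness of $\mathrm{Pd}T$ to terminate it. Fix $X\in\mathrm{Gen}T$ and write $n:=\mathrm{Pd}T<\infty$.

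First, I would upgrade self-orthogonality from $T^{(J)}$ to $\mathrm{Add}T$: any $T'\in\mathrm{Add}T$ is a direct summand of some $T^{(J)}$, and $\mathrm{Ext}^i_R(T,-)$ commutes with direct summands in the second variable, so $\mathrm{Ext}^i_R(T,T')=0$ for all $i\geq 1$ and all $T'\in\mathrm{Add}T$.

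Next, setting $X_0:=X$, I would use $\mathrm{Gen}T=\mathrm{Pres}T$ inductively to build, for every $k\geq 0$, a short exact sequence
\[0\longrightarrow X_{k+1}\longrightarrow T_k\longrightarrow X_k\longrightarrow 0\]
with $T_k\in\mathrm{Add}T$ and $X_{k+1}\in\mathrm{Gen}T$ (the fact that $\mathrm{Gen}T=\mathrm{Pres}T$ yields such a sequence at each stage because the kernel lies in $\mathrm{Gen}T$). Applying $\mathrm{Hom}_R(T,-)$ and using $\mathrm{Ext}^{i}_R(T,T_k)=0=\mathrm{Ext}^{i+1}_R(T,T_k)$ for $i\geq 1$, the long exact sequence gives an isomorphism
\[\mathrm{Ext}^i_R(T,X_k)\;\simeq\;\mathrm{Ext}^{i+1}_R(T,X_{k+1})\qquad(i\geq 1).\]
Iterating this isomorphism $k$ times yields $\mathrm{Ext}^i_R(T,X)\simeq\mathrm{Ext}^{i+k}_R(T,X_k)$ for all $i\geq 1$ and all $k\geq 0$.

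Finally, choosing $k$ large enough that $i+k>n=\mathrm{Pd}T$ forces the right-hand side to vanish, so $\mathrm{Ext}^i_R(T,X)=0$ for every $i\geq 1$, which is precisely $X\in T^{\bot_{i\geq 1}}$. The only real content is the dimension-shift isomorphism; there is no genuine obstacle, provided one is careful to note that self-orthogonality against arbitrary direct sums automatically extends to all of $\mathrm{Add}T$ and that the hypothesis $\mathrm{Gen}T=\mathrm{Pres}T$ can be applied at every step of the induction (so each kernel $X_{k+1}$ remains in $\mathrm{Gen}T$ and the construction does not break down).
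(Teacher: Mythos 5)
Your proposal is correct and follows essentially the same route as the paper: using $\mathrm{Gen}T=\mathrm{Pres}T$ to build an $\mathrm{Add}T$-resolution of $X$ with all kernels in $\mathrm{Gen}T$, then dimension shifting via self-orthogonality and terminating with $\mathrm{Pd}T=n<\infty$. The only cosmetic difference is that you spell out the iterated short exact sequences and the extension of self-orthogonality to $\mathrm{Add}T$, which the paper leaves implicit.
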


\begin{proof}
    Taken any $X\in\mathrm{Gen}T$, we need only to show that $X\in T^{\bot_{i\ge 1}}$.
	
    Since $\mathrm{Gen}T = \mathrm{Pres}T$, one easily obtains that, for any $X\in \mathrm{Gen}T$, there is a long exact sequence
	$$\cdots\rightarrow T^{(J_{1})}\stackrel{f_{1}}\rightarrow T^{(J_{0})}\stackrel{f_{0}}\rightarrow X\rightarrow 0$$
	with $\mathrm{Ker}f_{j}\in \mathrm{Gen}T$ for each $j\ge 0$.

    Now assume that $\mathrm{Pd}T= n < \infty$. Since $T$ is self-orthogonal, by applying the functor $\mathrm{Hom}_R(T,-)$ to the above exact sequence, we obtain that
	$$\mathrm{Ext}^{i}_{R}(T,X)\cong \mathrm{Ext}^{n+i}_{R}(T,\mathrm{Ker}f_{n-1})=0$$
	for integer $i\geqslant 1$ by the dimension shifting. It follows that $X\in T^{\bot_{i\ge 1}}$.
\end{proof}

\begin{theorem}\label{2T}
	Let $T$ be a sincere silting $R$-module of finite projective dimension. Then $T$ is a tilting module if and only if
	$\mathrm{Ext}^{i}_{R}(T,T^{(J)})=0$ for any set $J$ and integer $i\geqslant 1$.
\end{theorem}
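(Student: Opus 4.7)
The plan is to reduce the theorem to Corollary \ref{T} (which characterizes tilting modules as sincere silting modules satisfying $\mathrm{Gen}T\subseteq T^{\bot_2}$) and then apply Lemma \ref{LL} to get the required $\mathrm{Ext}^2$-vanishing from the self-orthogonality hypothesis.

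For the forward direction, I would note that if $T$ is tilting then by Definition \ref{T1} we have $\mathrm{Pd}T\le 1$, so $\mathrm{Ext}^i_R(T,T^{(J)})=0$ for all $i\ge 2$ automatically, while the case $i=1$ is exactly condition $(T2)$.

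For the backward direction, $T$ is already sincere silting by hypothesis, so by Corollary \ref{T} it suffices to verify $\mathrm{Gen}T\subseteq T^{\bot_2}$. Here I would invoke Lemma \ref{LL}: since any silting module satisfies $\mathrm{Gen}T=\mathrm{Pres}T$ (this is a standard property of silting modules used already in the proof of Corollary \ref{C}), and since $T$ is assumed to be self-orthogonal and of finite projective dimension, Lemma \ref{LL} gives the stronger inclusion $\mathrm{Gen}T\subseteq T^{\bot_{i\ge 1}}$, which in particular yields $\mathrm{Gen}T\subseteq T^{\bot_2}$. Combining with Corollary \ref{T} finishes the argument.

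The theorem is therefore a short composition of previously established results, and there is no real obstacle at this stage: all the technical work has been absorbed into Lemma \ref{LL} (the dimension-shifting argument along a $\mathrm{Gen}T$-resolution) and into the characterization in Corollary \ref{T}. The only small point to flag is the hypothesis $\mathrm{Gen}T=\mathrm{Pres}T$ required by Lemma \ref{LL}; this is immediate for silting $T$ since $\mathrm{Gen}T=D_\sigma$ is closed under kernels of epimorphisms between objects of $D_\sigma$, and any $X\in\mathrm{Gen}T$ fits into a short exact sequence $0\to X'\to T^{(J)}\to X\to 0$ with $X'\in\mathrm{Gen}T$.
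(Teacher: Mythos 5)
Your proof is correct and follows essentially the same route as the paper: the backward direction applies Lemma \ref{LL} (using that silting modules satisfy $\mathrm{Gen}T=\mathrm{Pres}T$) to get $\mathrm{Gen}T\subseteq T^{\bot_2}$ and then invokes Corollary \ref{T}, exactly as the paper does. Your forward direction merely spells out what the paper declares obvious, so there is nothing to add.
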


\begin{proof}
	$\Rightarrow)$ It is obvious.

	$\Leftarrow )$ By Lemma \ref{LL}, we have that $\mathrm{Gen}T\subseteq T^{\bot_{i\ge 1}}\subseteq T^{\bot_2}$. Then the assertion follows from Corollary \ref{T}.
\end{proof}

{\small

}

\end{document}